\DeclareSymbolFont{largesymbol}{OMX}{yhex}{m}{n}
\DeclareMathAccent{\Widehat}{\mathord}{largesymbol}{"62}
\definecolor{verylight}{gray}{0.97}
\definecolor{light}{gray}{0.9}
\definecolor{medium}{gray}{0.85}
\definecolor{dark}{gray}{0.6}
\def\G{{\mathcal G}}
\def\opn#1#2{\def#1{\operatorname{#2}}} 
\opn\chara{char} \opn\length{\ell} \opn\pd{pd} \opn\rk{rk}
\opn\projdim{proj\,dim} \opn\injdim{inj\,dim} \opn\rank{rank}
\opn\depth{depth} \opn\grade{grade} \opn\height{height}
\opn\embdim{emb\,dim} \opn\codim{codim}
\opn\Tr{Tr} \opn\bigrank{big\,rank}
\opn\superheight{superheight}\opn\lcm{lcm}
\opn\trdeg{tr\,deg}
\opn\reg{reg} \opn\lreg{lreg} \opn\ini{in} \opn\lpd{lpd}
\opn\size{size} \opn\sdepth{sdepth}
\opn\link{link}\opn\fdepth{fdepth}\opn\lex{lex}
\opn\tr{tr}
\opn\type{type}
\opn\Borel{Borel}
\opn\cdeg{cdeg}
\opn\div{div} \opn\Div{Div} \opn\cl{cl} \opn\Cl{Cl}
\opn\Spec{Spec} \opn\Supp{Supp} \opn\supp{supp} \opn\Sing{Sing}
\opn\Ass{Ass} \opn\Min{Min}\opn\Mon{Mon}
\opn\Ann{Ann} \opn\Rad{Rad} \opn\Soc{Soc}
\opn\Im{Im} \opn\Ker{Ker} \opn\Coker{Coker} \opn\Am{Am}
\opn\Hom{Hom} \opn\Tor{Tor} \opn\Ext{Ext} \opn\End{End}
\opn\Aut{Aut} \opn\id{id}
\opn\nat{nat}
\opn\pff{pf}
\opn\Pf{Pf} \opn\GL{GL} \opn\SL{SL} \opn\mod{mod} \opn\ord{ord}
\opn\Gin{Gin} \opn\Hilb{Hilb}\opn\sort{sort}
\opn\PF{PF}\opn\Ap{Ap}
\opn\aff{aff} \opn
\opn\relint{relint} \opn\st{st}
\opn\lk{lk} \opn\cn{cn} \opn\core{core} \opn\vol{vol}  \opn\inp{inp} \opn\nilpot{nilpot}
\opn\link{link} \opn\star{star}\opn\lex{lex}\opn\set{set}
\opn\width{wd}
\opn\Fr{F}
\opn\QF{QF}
\opn\G{G}
\opn\type{type}\opn\res{res}
\opn\gr{gr}
\def\cdeg{deg}
\def\pot#1#2{#1[\kern-0.28ex[#2]\kern-0.28ex]}
\opn\dirlim{\underrightarrow{\lim}}
\opn\inivlim{\underleftarrow{\lim}}
\let\to=\rightarrow
\def\Implies{\ifmmode\Longrightarrow \else
	\unskip${}\Longrightarrow{}$\ignorespaces\fi}
\def\implies{\ifmmode\Rightarrow \else
	\unskip${}\Rightarrow{}$\ignorespaces\fi}
\def\iff{\ifmmode\Longleftrightarrow \else
	\unskip${}\Longleftrightarrow{}$\ignorespaces\fi}
\newtheorem{Theorem}{Theorem}[section]
\newtheorem{Lemma}[Theorem]{Lemma}
\let\epsilon\varepsilon
\let\kappa=\varkappa
\def\qed{\ifhmode\textqed\fi
	\ifmmode\ifinner\quad\qedsymbol\else\dispqed\fi\fi}
\def\textqed{\unskip\nobreak\penalty50
	\hskip2em\hbox{}\nobreak\hfil\qedsymbol
	\parfillskip=0pt \finalhyphendemerits=0}
\def\dispqed{\rlap{\qquad\qedsymbol}}
\opn\dis{dis}
\def\pnt{{\raise0.5mm\hbox{\large\bf.}}}
\opn\Lex{Lex}
\begin{document}
	\title { Freiman  $t$-spread  principal Borel ideals}
	
	\author {Guangjun Zhu$^{^*}$\!\!\!, Yakun Zhao and Yijun Cui }

	\address{Authors address:  School of Mathematical Sciences, Soochow
		University, Suzhou 215006, P.R. China}
	\email{zhuguangjun@suda.edu.cn(Corresponding author:Guangjun Zhu),
		\linebreak[4]1768868280@qq.com(Yakun Zhao), 237546805@qq.com(Yijun Cui). \linebreak[2]}

	\dedicatory{ }
	
	\begin{abstract}
	An equigenerated monomial ideal $I$ is a
Freiman ideal if $\mu(I^2)=\ell(I)\mu(I)-{\ell(I)\choose 2}$ where $\ell(I)$ is the analytic spread of $I$ and $\mu(I)$ is the least number of  monomial generators of $I$.  	
 Freiman ideals are special since there
exists an exact formula computing the least number of  monomial generators of any of their
powers. In this paper we  give a complete classification of Freiman $t$-spread principal Borel ideals.

	\end{abstract}
	
	\thanks{* Corresponding author}
	
	\subjclass[2010]{Primary 13C99; Secondary 13E15, 13A15.}
	
	
	\keywords{Freiman ideal, sorted ideal,  $t$-spread principal Borel  ideal, the sorted graph.}
	
	\maketitle
	
	\setcounter{tocdepth}{1}

	\section*{Introduction}

	Let $K$ be a field and $S=K[x_1,\ldots,x_n]$ the polynomial ring in $n$ variables over $K$, and let $I\subset S$ be a monomial ideal.
We denote by $\mathcal{G}(I)$ the unique minimal set of monomial
generators of $I$ and by $\mu(I)$ the number of elements in  $\mathcal{G}(I)$.
It is  a very difficult problem to exactly compute  $\mu(I^k)$ for each integer $k\geq 2$.
If $I$ is generated by a regular sequence and $\mu(I)=m$, it is well known that $\mu(I^k)={k+m-1\choose m-1}$,
which is the maximal that $\mu(I^k)$ can reach.
At the other extreme, Eliahou et. al. \cite{EHS} constructed  a family of monomial ideals  such that $\mu(I)$ can be arbitrarily large but satisfy
 $\mu(I^2)=9$.

As a consequence of the well-known theorem due to Freiman (see \cite{Fre}) from additive
number theory,  Herzog et. al. showed in  \cite[Theorem 1.8]{HSZ} that if $I$ is an equigenerated monomial ideal, that is, all its generators are of the same degree,
then $\mu(I^2)\geq \ell(I)\mu(I)-{\ell(I)\choose 2}$,
 where $\ell(I)$ is the analytic spread of $I$. If the equality holds, then Herzog and Zhu \cite{HZ1} called this ideal $I$ to be a {\em Freiman} ideal (or simply Freiman).
What makes a Freiman ideal  interesting is the fact that $\mu(I^k)$, for any $k$, can be computed by an exact formula in terms of $\mu(I)$ and $\ell(I)$ (see \cite[Corollary 2.9]{HZ1}).
Several characterizations of Freiman ideals were provided in \cite{HHZ}. For example, it was shown  that $I$ is  Freiman
 if and only if $F(I)$  has minimal multiplicity if and only if  $F(I)$  has a $2$-linear resolution, where $F(I)$ is the fiber cone  of $I$.
 This  implies that the defining ideal of $F(I)$  is generated by quadrics. It is a very restrictive condition for ideals arising from combinatorial structures, which often guarantees strong combinatorial properties.

Freiman ideals in the classes of Hibi ideals, Veronese type ideals, matroid ideals,  sortable ideals, edge ideals of several
graphs,   cover ideals of  simple connected unmixed bipartite graphs, and  cover ideals of some classes of graphs such as trees, circulant graphs, and whiskered graphs have been studied (see \cite{DG,HHZ,HZ1,HZ2,ZZC}).

Squarefree monomial ideals play some important roles in Commutative Algebra, not
only for its intrinsic value but  for its strong connections to Combinatorics and Topology.
Recently, a generalization of the notion of squarefree ideal has been given
in \cite{EHQ} by the definition of $t$-spread monomial ideal.
Let  $t\geq 0$  be  an integer and  $u=x_{i_1}x_{i_2}\cdots x_{i_d}$  a monomial in  $S$ with $1\leq i_1\leq \cdots \leq i_d\leq n$. $u$ is called {\em $t$-spread} if $i_{j+1}-i_{j}\geq t$ for $1\leq j\leq d-1$.
By convention, a monomial of degree one is $t$-spread for any nonnegative integer $t$.
A monomial ideal $I\subset S$ is {\em $t$-spread} if it is generated by $t$-spread monomials.  Obviously, every monomial ideal is $0$-spread and every $t$-spread monomial ideal  with $t\geq 1$ is a  squarefree monomial ideal.
A monomial ideal $I$ is called {\em $t$-spread strongly stable} if it satisfies the following condition: for all $u\in \mathcal{G}(I)$ and $j\in supp(u)$, if $i<j$ and $x_i(u/x_j)$ is $t$-spread, then $x_i(u/x_j)\in I$. A monomial ideal $I\subset S$ is called {\em $t$-spread principal Borel} if there exists a monomial $u\in \mathcal{G}(I)$ such that $I=B_t(u)$, where $B_t(u)$ denotes the smallest $t$-spread strongly stable ideal which contains $u$.
It is obvious that any $0$-spread principal Borel ideal is a principal Borel ideal.
 Herzog and Zhu in \cite[Theorem 4]{HZ2} gave a complete characterization of Freiman principal Borel ideals  by considering their  sorted graphs.

The purpose of this paper is to  provide a complete characterization of Freiman $t$-spread principal Borel ideals for any integer $t\geq 1$.

Our paper is organized as follows. In Sect. 2, we recall some  basic
facts used in the following sections. In Sect. 3, we give a complete characterization of Freiman $t$-spread principal Borel ideals for any integer $t\geq 1$.
The results are as follows:

\begin{Theorem} \label{degree2}
Let $u=\prod\limits_{j=1}^{d}x_{i_j}$  be a t-spread monomial of degree $d$  in $S$.
\begin{itemize}
		\item[(1)]  If $d=1$, then  $B_{t}(u)$ is Freiman;
        \item[(2)]  If $d=2$, then  $B_t(u)$ is Freiman if and only if $i_{1}\leq 2$, or $i_1=3$ and $i_2=t+3$.
       \item[(3)]  If $d\geq 3$, $i_1=1$ and $t=1$, then $B_{t}(u)$ is Freiman if and only if one of the follows holds:
	\begin{itemize}
		\item[(i)]   $i_j=(j-1)t+1$ for any $j\in [d-2]$ and $i_{d-1}\in \{(d-2)t+1,(d-2)t+2\}$; 
		\item[(ii)] There exsits some $p\in [d-3]$  such that $i_j=(j-1)t+1$ for any $j\in [p]$,
  $i_j=(j-1)t+2$ for any $j\in [d-1]\setminus [p]$ and $i_{d}\in \{(d-1)t+2,(d-1)t+3\}$.
     \end{itemize}
    \item[(4)]  If $d\geq 3$, $i_1=1$ and $t\geq 2$, then $B_{t}(u)$ is Freiman if and only if one of the follows holds:
	\begin{itemize}
			\item[(i)]   $i_j=(j-1)t+1$ for any $j\in [d-2]$ and  $i_{d-1}\in \{(d-2)t+1,(d-2)t+2\}$, or $i_j=(j-1)t+1$ for any $j\in [d-2]$,
 $i_{d-1}=(d-2)t+3$ and $i_{d}=(d-1)t+3$;
		\item[(ii)] There exsits some $q\in [d-3]$  such that $i_j=(j-1)t+1$ for any $j\in [q]$,
  $i_j=(j-1)t+2$ for any $j\in [d-1]\setminus [q]$ and $i_{d}\in \{(d-1)t+2,(d-1)t+3\}$.
     \end{itemize}
    \item[(5)] If $d\geq 3$ and $i_1=2$, then $B_t(u)$ is Freiman if and only if $u=\prod\limits_{i=1}^{d}x_{(i-1)t+2}$ or $u=(\prod\limits_{i=1}^{d-1}x_{(i-1)t+2})x_{(d-1)t+3}$;
     \item[(6)] If $d\geq 3$ and $i_1\geq 3$, then  $B_{t}(u)$ is not Freiman.
     \end{itemize}
\end{Theorem}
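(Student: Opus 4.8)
The plan is to reduce the whole classification to a careful analysis of the sorted graph $\Gamma$ of $B_t(u)$, using the known fact (from the cited work of Herzog--Zhu and the characterizations in \cite{HHZ}) that a sortable ideal $I$ is Freiman if and only if the toric/defining ideal of $F(I)$ is generated by quadrics, equivalently $F(I)$ has a $2$-linear resolution, equivalently its sorted graph has no induced even cycle of length $\geq 4$ and satisfies the relevant chordality-type condition. Since $t$-spread principal Borel ideals are sortable, the combinatorial criterion applies, and the problem becomes: for which $u$ does the sorted graph of $B_t(u)$ avoid the forbidden induced subgraphs? First I would set up notation for the generators of $B_t(u)$ — these are exactly the $t$-spread monomials $v=\prod_{j=1}^d x_{k_j}$ with $k_j\le i_j$ for all $j$ and $k_{j+1}-k_j\ge t$ — and describe the sorting operator explicitly on pairs of such monomials, so that edges of $\Gamma$ can be read off combinatorially.

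The main body splits along the six cases of the statement, and I expect cases (1) and (2) to be short: in degree $1$ the ideal is linear hence trivially Freiman, and in degree $2$ a direct count of $\mu(B_t(u))$ and $\mu(B_t(u)^2)$ against the Freiman bound $\ell(I)\mu(I)-\binom{\ell(I)}{2}$ suffices, since $\ell(I)$ for these small ideals is easy to pin down. For the substantial cases (3)--(6) with $d\ge 3$, I would argue in two directions. For the ``only if'' direction, given a $u$ outside the listed families, I would exhibit an explicit short list of generators of $B_t(u)$ whose sorted images form a forbidden configuration (a $4$-cycle or a longer induced even cycle) in $\Gamma$, thereby forcing a non-quadratic relation in the defining ideal of $F(I)$ and violating Freiman-ness; the worst offender to rule out is $i_1\ge 3$ in case (6), where one wants a uniform such configuration independent of $d$ and $t$. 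For the ``if'' direction, for each listed family I would show the sorted graph is ``gap-free'' in the appropriate sense — concretely, that after sorting, any two generators differ in a controlled way so that $\Gamma$ is chordal with no bad even cycles — and then invoke the equivalence to conclude $F(I)$ has a $2$-linear resolution.

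The key structural tool I would lean on throughout is the recursive/peeling description of $B_t(u)$: writing $u' = u/x_{i_d}$ (or removing $x_{i_1}$), one has an exact sequence relating $B_t(u)$ to $B_t(u')$ and a shifted copy, which lets one induct on $d$ both for computing $\mu(I)$, $\mu(I^2)$ and $\ell(I)$ and for locating induced cycles in $\Gamma$. The sharp thresholds appearing in the statement — the alternatives $i_{d-1}\in\{(d-2)t+1,(d-2)t+2\}$, the special branch $i_{d-1}=(d-2)t+3, i_d=(d-1)t+3$ that only survives when $t\ge 2$, the single ``phase transition'' index $p$ (resp.\ $q$) where the pattern switches from $(j-1)t+1$ to $(j-1)t+2$ — should emerge precisely as the boundary at which a new induced $4$-cycle is created among the sorted generators. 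I expect the hard part to be exactly this boundary analysis: proving that one more unit of freedom in $i_j$ for some intermediate $j$ always introduces a forbidden cycle, while the listed endpoints do not. This requires a precise understanding of how the sorting operator interacts with the ``staircase'' defining $B_t(u)$, and disentangling why $t=1$ and $t\ge 2$ behave differently (the extra room when $t\ge 2$ allows the $i_{d-1}=(d-2)t+3$ branch because the relevant would-be cycle fails to be induced). I would organize this as a sequence of lemmas: one computing the invariants in the ``good'' families, one producing the obstruction in each ``bad'' family, and a final assembly matching the two lists.
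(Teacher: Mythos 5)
Your overall strategy coincides with the paper's: both rest on the fact that $t$-spread principal Borel ideals are sortable and on the criterion of Herzog--Zhu that a sortable ideal is Freiman exactly when its sorted graph is chordal, with explicit induced $4$-cycles for the ``bad'' monomials, chordality (often completeness) of the sorted graph for the listed families, and a peeling reduction on $d$ when $i_1=1$. However, as a proof your text has genuine gaps. First, the criterion is misstated: what is needed is chordality, i.e.\ the absence of induced cycles of length $\geq 4$ of \emph{any} parity, not just even ones, and there is no separate ``chordality-type condition'' to add; being vague here matters because every obstruction in the classification is exhibited as an induced $4$-cycle. Second, and decisively, the entire content of the theorem lies in the verifications you defer: you never construct the forbidden configurations at the boundary cases (e.g.\ $i_j=(j-1)t+3$ for an intermediate $j$, or $i_j\geq (j-1)t+4$, or $i_1\geq 3$ with the uniform $4$-cycle $\{x_1x_{t+1}x_{2t+3}v,\,x_2x_{t+2}x_{2t+3}v,\,x_1x_{t+2}x_{2t+2}v,\,x_1x_{t+3}x_{2t+3}v\}$), and you never verify chordality for the surviving families $u=\prod_i x_{(i-1)t+2}$ and $u=(\prod_i x_{(i-1)t+2})x_{(d-1)t+3}$, where one must actually analyze which pairs of generators are sorted and rule out induced cycles of length $4$ and $\geq 5$ separately. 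Announcing that ``the hard part is exactly this boundary analysis'' does not discharge it.

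Two further points would fail or mislead as written. Your degree-$2$ plan of comparing $\mu(I^2)$ with $\ell(I)\mu(I)-\binom{\ell(I)}{2}$ is not obviously easier than the graph argument, and it glosses over the one delicate phenomenon in that case: for $i_1=3$, $i_2=t+3$ the answer depends on $t$ (for $t=1$ the sorted graph contains the induced $4$-cycle on $x_1x_2,x_2x_3,x_3x_4,x_1x_4$, so the ideal is \emph{not} Freiman, while for $t\geq 2$ it is), a distinction your outline never detects; the same blind spot appears in your treatment of the $i_{d-1}=(d-2)t+3$, $i_d=(d-1)t+3$ branch, which you attribute to ``extra room'' without argument. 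Finally, your reduction tool is imprecise: an ``exact sequence'' relating $B_t(u)$ to $B_t(u/x_{i_d})$ plus a shifted copy is not what makes the induction work, and removing the largest variable does not interact well with the sorted graph. The reduction that actually functions is the one for $i_1=1$: since then every generator is divisible by $x_1$, dividing by $x_1$ and shifting all remaining indices down by $t$ gives a bijection of generating sets that preserves sortedness of pairs, hence an isomorphism of sorted graphs, and this is what reduces the $i_1=1$, $d\geq 3$ cases to lower degree and to the $i_1\geq 2$ cases. Without stating and proving this precise compatibility (rather than an unspecified exact sequence for computing $\mu$ and $\ell$), the inductive assembly of cases (3) and (4) does not go through.
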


\medskip
We greatfully acknowledge the use of the computer algebra system CoCoA (\cite{Co}) for our experiments.

\medskip

\section{Preliminaries}

We firstly recall some basic facts about the sorted graph of a monomial ideal from \cite{HZ2}. Sortable sets of monomials  have been introduced by Sturmfels \cite{St}.  Some basic properties  of sortable ideals are
discussed  in  \cite{EH}.

Let $K$ be a field and $S=K[x_1,\ldots,x_n]$ the polynomial ring in $n$ variables over $K$.
Let $d$ be a positive integer, $S_d$   the $K$-vector space generated by the monomials of degree $d$ in $S$, and take two monomials $u,v\in S_d$. We
write $uv=x_{i_1}x_{i_2}\cdots x_{i_{2d}}$ with $1\leq i_1\leq i_2\leq \cdots \leq i_{2d}\leq n$,  and define
$$u'=x_{i_1}x_{i_3}\cdots x_{i_{2d-1}},\quad \text{and} \quad v'=x_{i_2}x_{i_4}\cdots x_{i_{2d}}.$$
The pair $(u',v')$ is called the {\it sorting} of $(u,v)$. The map
\[
\sort: S_d\times S_d \to S_d\times S_d,\ (u, v)\mapsto (u', v')
\]
is called the {\em sorting operator}. For example, if $u=x_1^{2}x_3$
and $v=x_2x_3^2$, then $\sort(u,v)= (x_1x_2x_3, x_1x_3^2)$.
 A pair $(u,v)$ is called to be  {\em sorted} if $\sort(u,v)=(u,v)$ or
$\sort(u,v)=(v,u)$, otherwise it is called to be {\em unsorted}. Notice that $\sort(u,v)=\sort(v,u)$,  and that if $(u,v)$ is sorted, then
$u\geq_{lex} v$.

A subset $B$ of monomials in $S_d$ is called {\em sortable} if
$\sort(B\times B)\subset B\times B$.
A  monomial ideal $I\subset S$ is called a {\em sortable
ideal} if $\mathcal{G}(I)$ is a sortable set.

Given a sortable ideal $I$, we associate a finite simple graph $G_{I,s}$, the {\em sorted graph} $G_{I,s}$ of $I$,
  whose vertex set is  $\mathcal{G}(I)$ and  edge set is defined as follows:
\[
E(G_{I,s})=\{\{u,v\}\: \text{$u\neq v$, $(u,v)$  is sorted}\}.
 \]

It was shown in \cite[Proposition 3.1]{EHQ} that any  $t$-spread principal Borel ideal is a sortable ideal. Therefore, we may apply the following theorem  to check which of the $t$-spread principal Borel ideals
are Freiman.

\begin{Theorem}
\label{sortable}{\em(\cite[Theorem 3]{HZ2})}
Let $I\subset S$ be a sortable ideal. Then $I$ is  Freiman  if and only if the sorted graph $G_{I,s}$ is chordal. In particular, if $G_{I,s}$
contains an induced  $t$-cycle of length  $t\geq 4$, then $I$ is not Freiman.
\end{Theorem}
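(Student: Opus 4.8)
The plan is to use Theorem~\ref{sortable} as the single decisive tool: since every $t$-spread principal Borel ideal $B_t(u)$ is sortable by \cite[Proposition 3.1]{EHQ}, the ideal $B_t(u)$ is Freiman if and only if its sorted graph $G_{B_t(u),s}$ is chordal, and in particular it fails to be Freiman as soon as $G_{B_t(u),s}$ contains an induced cycle of length $\geq 4$. Thus the entire theorem reduces to a combinatorial analysis of the sorted graph, carried out case by case according to $d$ and $i_1$ (and, when $i_1=1$ and $d\geq 3$, according to whether $t=1$ or $t\geq 2$). I would first record two preparatory facts. First, an explicit description of $\mathcal{G}(B_t(u))$: a $t$-spread monomial $v=x_{j_1}\cdots x_{j_d}$ lies in $B_t(u)$ exactly when $j_k\leq i_k$ for all $k$ (componentwise domination of the sorted exponent vectors), since $B_t(u)$ is the smallest $t$-spread strongly stable ideal containing $u$. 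Second, a usable criterion for when a pair $(v,w)$ of generators is sorted, obtained directly from the definition of the sorting operator applied to $t$-spread monomials; this lets me decide adjacency in $G_{B_t(u),s}$ purely from the index sequences.

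\textbf{The small-degree and large-$i_1$ cases.} For $d=1$ (part (1)), $B_t(u)=\mathfrak{m}=(x_1,\dots,x_n)$ up to the relevant variables, its sorted graph is complete, hence chordal, so $B_t(u)$ is Freiman. For $d=2$ (part (2)) I would enumerate $\mathcal{G}(B_t(u))$ — it consists of all $t$-spread $x_ax_b$ with $a\leq i_1$, $b\leq i_2$ — and then exhibit, whenever $i_1\geq 4$, or $i_1=3$ with $i_2\geq t+4$, an explicit induced $4$-cycle among four such generators, while for $i_1\leq 2$ or the exceptional pair $(3,t+3)$ I would check directly that the sorted graph is chordal (for $i_1\leq 2$ one can show every generator of the form $x_1x_b$ or $x_2x_b$ is adjacent to a common ``dominating'' generator, killing long induced cycles; the single extra case is a finite verification). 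For $d\geq 3$ and $i_1\geq 3$ (part (6)), I expect to produce a uniform induced $4$-cycle: take the four $t$-spread monomials obtained from $u$ by lowering two of the first few indices in the two possible ways, and verify using the sorted-pair criterion that exactly the ``opposite'' pairs are sorted while the ``diagonal'' pairs are not, giving a $C_4$ with no chord. The case $i_1=2$, $d\geq 3$ (part (5)) is similar: unless $u$ is the extremal $t$-spread monomial starting at $2$ (or that monomial with its last index bumped by one), one can shift some interior index down and build an induced $4$-cycle, while the two surviving ideals have a small, explicitly chordal sorted graph.

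\textbf{The main case $i_1=1$, $d\geq 3$.} This is where parts (3) and (4) live, and this is the hard part. Here $\mathcal{G}(B_t(u))$ can be large, and one must identify precisely the ``maximal'' shapes of $u$ for which $G_{B_t(u),s}$ is still chordal. My plan is: (a) observe that $B_t(u)=B_t(u')$ where $u'=x_1(u/x_{i_1})$ has the same index sequence shifted, so without loss $i_1=1$; (b) show that if $u$ is ``too spread'' — i.e. some $i_j$ exceeds the bounds $(j-1)t+2$ or $(j-1)t+3$ permitted in (i)/(ii) — then there is enough room to choose four generators forming an induced cycle of length $4$ (or longer), so $B_t(u)$ is not Freiman; the cycle is built by perturbing two coordinates of $u$ independently, and the $t\geq 2$ versus $t=1$ distinction enters because when $t\geq 2$ there is an extra unit of slack that permits the additional extremal family ``$i_{d-1}=(d-2)t+3$, $i_d=(d-1)t+3$'' in (4)(i) to remain chordal, whereas for $t=1$ the analogous configuration already contains an induced $4$-cycle; (c) conversely, for each of the finitely many extremal shapes listed in (3)(i)(ii) and (4)(i)(ii), compute $G_{B_t(u),s}$ explicitly — these ideals have a structured, ``near-Veronese'' generator set — and verify chordality, e.g. by exhibiting a perfect elimination ordering on the vertices, ordering generators by their index sequence read lexicographically and checking that the later-neighbours of each vertex form a clique (equivalently, that any two generators dominated by a common generator and both sorted with it are sorted with each other). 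Step (c), the verification of chordality for the extremal families, is the most delicate and computational part, and it is exactly the point where the CoCoA experiments acknowledged in the introduction serve as a guide; the write-up would organise it by giving, for each family, the list of generators, the claimed elimination order, and the clique check, treating $t=1$ and $t\geq 2$ in parallel and flagging the one configuration where they diverge.
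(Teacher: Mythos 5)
The statement you were asked to prove is Theorem~\ref{sortable} itself: for a sortable ideal $I$, Freiman is equivalent to chordality of the sorted graph $G_{I,s}$. Your proposal does not prove this statement at all — it takes it as "the single decisive tool" and then sketches the case-by-case classification of Freiman $t$-spread principal Borel ideals, which is the paper's main theorem, not the quoted one. Relative to the assigned statement this is circular: nothing in your write-up connects the Freiman property (a statement about $\mu(I^2)$, the analytic spread, equivalently about the fiber cone $F(I)$) to the combinatorics of sorted pairs. In the paper itself the statement is simply cited from \cite{HZ2}, so there is no internal proof to match; but a genuine proof requires a specific chain of ideas that is entirely absent from your proposal.

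Concretely, the missing content is the following. For a sortable ideal the defining ideal of the fiber cone $F(I)$ has a Gr\"obner basis consisting of the sorting relations (Sturmfels \cite{St}, see also \cite{EH}), so its initial ideal with respect to the sorting order is generated by the quadratic monomials $T_uT_v$ with $(u,v)$ unsorted; in other words, the initial ideal is the edge ideal of the complement of $G_{I,s}$. By the characterization in \cite{HHZ}, $I$ is Freiman if and only if $F(I)$ has a $2$-linear resolution, and this property can be transferred to the initial ideal; finally, Fr\"oberg's theorem says that the edge ideal of a graph has a linear resolution exactly when the complementary graph is chordal, which yields the equivalence and, as a special case, the statement about induced cycles of length at least $4$. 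Your paragraphs about perfect elimination orderings, explicit $4$-cycles in $G_{B_t(u),s}$, and the CoCoA-guided verification of the extremal families are all downstream applications of Theorem~\ref{sortable}; they would belong to proofs of Theorems~\ref{degree2}--\ref{start}, not to a proof of Theorem~\ref{sortable} itself.
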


\medskip

\section{Freiman $t$-spread principal Borel ideals}

In this section, we will characterize all Freiman ideals among the  $t$-spread principal Borel ideals for any integer $t\geq 1$.

If $u=x_{i}$ for some $1\leq i\leq n$, then  $B_t(u)=(x_1,\ldots,x_i)$ by convention. It follows that  $B_t(u)$ is Freiman from \cite[Proposition 3.2 and Theorem 3.3]{HZ1}.
Therefore, from now on, we will always assume that  $d\geq 2$ and $t\geq 1$ are two integers.

\begin{Lemma}\label{simple}
	Let  $u=x_{i_1}x_{i_2}\cdots x_{i_d}$ be a $t$-spread monomial of degree $d$ such that $i_j=(j-1)t+1$ for $1\leq j\leq d-1$ and $i_d>(d-1)t+1$.  Then $B_t(u)$ is Freiman.
\end{Lemma}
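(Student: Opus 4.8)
The plan is to make the minimal generating set of $B_t(u)$ completely explicit, observe that it is a one–parameter ``fan'' of monomials differing only in their last exponent, and then check that any two of these monomials form a sorted pair; this forces the sorted graph to be complete, hence chordal, so Theorem~\ref{sortable} finishes the argument.

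First I would pin down $\mathcal{G}(B_t(u))$. Recall that $B_t(u)$ is generated by the monomials obtained from $u=x_1x_{t+1}\cdots x_{(d-2)t+1}x_{i_d}$ by repeatedly replacing, in a monomial already known to lie in the ideal, a variable $x_\ell$ by a variable $x_k$ with $k<\ell$ whenever the result is still $t$-spread. Now $x_1$ cannot be lowered; and lowering the $j$-th variable $x_{(j-1)t+1}$ for $2\le j\le d-1$ to some $x_k$ with $k<(j-1)t+1$ would force $k-((j-2)t+1)\ge t$, i.e.\ $k\ge(j-1)t+1$, a contradiction. Hence the only variable that can ever be moved is the last one, which may be replaced by any $x_k$ with $(d-1)t+1\le k\le i_d-1$, and applying the operation again to the monomials so produced yields nothing new. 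Writing $s:=i_d-(d-1)t$ (which is $\ge 2$ since $i_d>(d-1)t+1$), we get
\[
\mathcal{G}(B_t(u))=\{v_1,\dots,v_s\},\qquad v_m:=\Bigl(\prod_{j=1}^{d-1}x_{(j-1)t+1}\Bigr)x_{(d-1)t+m}.
\]
Equivalently, this is the familiar fact that a $t$-spread monomial $x_{j_1}\cdots x_{j_d}$ lies in $B_t(u)$ iff $j_k\le i_k$ for all $k$, which here (using $j_k\ge j_1+(k-1)t\ge(k-1)t+1$) freezes $j_k=(k-1)t+1$ for every $k\le d-1$.

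Next I would verify that every pair $(v_a,v_b)$ with $1\le a<b\le s$ is sorted. Since $(d-1)t+a>(d-2)t+1$ for $a\ge1$ and $t\ge1$, the product $v_av_b$, written with weakly increasing indices, is
\[
x_1^2\,x_{t+1}^2\cdots x_{(d-2)t+1}^2\,x_{(d-1)t+a}\,x_{(d-1)t+b},
\]
so in the notation $v_av_b=x_{i_1}x_{i_2}\cdots x_{i_{2d}}$ one has $i_{2r-1}=i_{2r}=(r-1)t+1$ for $1\le r\le d-1$, together with $i_{2d-1}=(d-1)t+a$ and $i_{2d}=(d-1)t+b$. Feeding this into the sorting operator, the odd-position factors reassemble to $x_1x_{t+1}\cdots x_{(d-2)t+1}x_{(d-1)t+a}=v_a$ and the even-position factors to $v_b$, so $\sort(v_a,v_b)=(v_a,v_b)$; in particular $(v_a,v_b)$ is sorted. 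Thus any two distinct vertices of $G_{B_t(u),s}$ are joined by an edge, i.e.\ $G_{B_t(u),s}$ is the complete graph on the $s$ vertices $v_1,\dots,v_s$.

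Finally, a complete graph contains no induced cycle of length $\ge4$ and is therefore chordal, so Theorem~\ref{sortable} yields that $B_t(u)$ is Freiman, as claimed. The only step that needs a little care is the first one: one must check that the $t$-spread constraint genuinely freezes the first $d-1$ exponents of every minimal generator, so that $\mathcal{G}(B_t(u))$ really is the one-parameter family $\{v_1,\dots,v_s\}$; once this rigidity is in place, both the sorting computation and the chordality conclusion are immediate.
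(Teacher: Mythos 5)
Your proposal is correct and follows essentially the same route as the paper: both identify $B_t(u)$ as $\bigl(\prod_{j=1}^{d-1}x_{(j-1)t+1}\bigr)(x_{(d-1)t+1},\ldots,x_{i_d})$, observe that every pair of generators is sorted so that $G_{B_t(u),s}$ is complete (hence chordal), and conclude via Theorem~\ref{sortable}. You merely spell out the rigidity of the first $d-1$ positions and the sorting computation in more detail than the paper does.
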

\begin{proof}
Since $B_t(u)=\prod\limits_{j=1}^{d-1}x_{(j-1)t+1}(x_{(d-1)t+1},x_{(d-1)t+2},\ldots,x_{i_{d}-1},x_{i_d})$, it follows that the sorted graph $G_{B_t(u),s}$ is a complete graph. Hence $B_t(u)$ is Freiman by Theorem\ref{sortable}.
\end{proof}

\begin{Theorem} \label{degree2} Let $u=x_{i_{1}}x_{i_{2}}$ be a $t$-spread monomial of degree $2$. 
\begin{enumerate}
\item[(a)]   If $t=1$, then $B_t(u)$ is  Freiman if and only if $i_1\leq 2$;
\item[(b)] If $t\geq 2$, then $B_t(u)$ is  Freiman  if and only if  $i_1\leq 2$, or $i_1=3$ and $i_2=t+3$.
  \end{enumerate}	
\end{Theorem}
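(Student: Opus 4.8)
The plan is to invoke Theorem~\ref{sortable}: since every $t$-spread principal Borel ideal is sortable, $B_t(u)$ is Freiman if and only if its sorted graph $G:=G_{B_t(u),s}$ is chordal, so the whole question becomes one about chordality of an explicit graph. For $u=x_{i_1}x_{i_2}$ I would first record two pieces of data. By repeatedly lowering the two exponents subject to the $t$-spread condition,
$$\mathcal{G}(B_t(u))=\{\,x_ax_b \;:\; 1\le a\le b,\ a\le i_1,\ b\le i_2,\ b-a\ge t\,\}.$$
And writing $x_ax_b\cdot x_cx_d$ in sorted order as $x_{p_1}x_{p_2}x_{p_3}x_{p_4}$, the pair $(x_ax_b,x_cx_d)$ is sorted precisely when $\{x_ax_b,x_cx_d\}=\{x_{p_1}x_{p_3},x_{p_2}x_{p_4}\}$, which for distinct generators with $a\le b$ and $c\le d$ means $a\le c\le b\le d$ or $c\le a\le d\le b$. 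I will use two consequences throughout: generators with the same first index are always adjacent (so those with first index $a$ span a clique $K_a$), and for $a<c$ the generators $x_ax_b$, $x_cx_d$ are adjacent if and only if $c\le b\le d$.

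For the Freiman directions I would check chordality by hand. If $i_1=1$ every generator lies in $K_1$, so $G$ is complete, hence chordal. If $i_1=2$ then $G=K_1\cup K_2$ and, since $b\ge t+1\ge 2$ makes the condition $c\le b$ automatic, $x_1x_b\sim x_2x_d$ if and only if $b\le d$; assigning to $x_1x_b$ the interval $[b,i_2+1]$ and to $x_2x_d$ the interval $[0,d]$ realizes $G$ as the intersection graph of these intervals, so $G$ is an interval graph and hence chordal. Finally, if $t\ge 2$, $i_1=3$ and $i_2=t+3$, then $\mathcal{G}(B_t(u))$ is exactly $\{x_1x_{t+1},x_1x_{t+2},x_1x_{t+3},x_2x_{t+2},x_2x_{t+3},x_3x_{t+3}\}$; since $t\ge 2$ forces every second index to be $\ge t+1\ge 3$, the adjacency rule shows that the unique non-adjacent pair among these six is $\{x_1x_{t+3},x_2x_{t+2}\}$, so $G$ is $K_6$ with one edge removed, which is chordal (an induced cycle of length $\ge 4$ contains two disjoint non-edges). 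In each of these cases $B_t(u)$ is Freiman.

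For the non-Freiman directions I would exhibit an induced $4$-cycle, which by Theorem~\ref{sortable} shows $G$ is not chordal. If $t=1$ and $i_1\ge 3$, then $i_2\ge i_1+1\ge 4$, so $x_1x_2,x_1x_4,x_3x_4,x_2x_3\in\mathcal{G}(B_t(u))$, and by the adjacency rule
$$x_1x_2 - x_1x_4 - x_3x_4 - x_2x_3 - x_1x_2$$
is an induced $4$-cycle (its diagonals $\{x_1x_2,x_3x_4\}$ and $\{x_1x_4,x_2x_3\}$ are non-edges). If $t\ge 2$ and $B_t(u)$ is not in the Freiman list, then necessarily $i_1\ge 3$ and $i_2\ge t+4$ (the case $i_1\ge 4$ already forces $i_2\ge i_1+t\ge t+4$), so $x_1x_{t+3},x_2x_{t+2},x_2x_{t+4},x_3x_{t+3}\in\mathcal{G}(B_t(u))$ and
$$x_2x_{t+4} - x_1x_{t+3} - x_3x_{t+3} - x_2x_{t+2} - x_2x_{t+4}$$
is an induced $4$-cycle (its diagonals $\{x_2x_{t+4},x_3x_{t+3}\}$ and $\{x_1x_{t+3},x_2x_{t+2}\}$ are non-edges, in each case because the generator with the smaller first index carries the larger second index). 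Hence $B_t(u)$ is not Freiman, and combining with the previous paragraph completes both (a) and (b).

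Once the adjacency rule is in place the arithmetic is routine; the part that needs a little care is the bookkeeping of exactly which monomials are generators in the borderline configurations, above all the extreme case $i_2=i_1+t$ (where $B_t(u)$ may even be principal), together with checking all six (non-)adjacencies of each candidate $4$-cycle. The one genuinely structural observation --- and the main thing to get right --- is that the range $i_1\ge 4$ requires no new argument: the four monomials of the $4$-cycles above remain generators when $i_1$ increases, and adjacency in $G$ depends only on the two monomials, not on $u$, so the very same induced $4$-cycle survives.
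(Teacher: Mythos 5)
Your proof is correct, and it follows the paper's overall strategy: both arguments reduce the statement via Theorem~\ref{sortable} to chordality of the sorted graph, and both certify non-Freimanness with explicit induced $4$-cycles (for $t\ge 2$ you use exactly the paper's four generators $x_1x_{t+3},x_3x_{t+3},x_2x_{t+2},x_2x_{t+4}$, and your analysis of the case $i_1=3$, $i_2=t+3$ as $K_6$ minus the edge $\{x_1x_{t+3},x_2x_{t+2}\}$ coincides with the paper's). The genuine divergence is in the positive case $i_1=2$: the paper excludes induced cycles of length $\ge 4$ by hand (a pigeonhole argument for length $\ge 5$ plus a two-case analysis of a putative $4$-cycle), whereas you first make the degree-two sorting criterion explicit ($x_ax_b$ is adjacent to $x_cx_d$ with $a<c$ iff $c\le b\le d$) and then realize the sorted graph as an interval graph, which yields chordality at once; this is cleaner and less error-prone, at the (small) cost of invoking the standard fact that interval graphs are chordal. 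You also unify the $t=1$, $i_1\ge 3$ non-Freiman cases with the single $4$-cycle $x_1x_2,x_1x_4,x_3x_4,x_2x_3$, where the paper splits them between its cases (3) and (4). The two facts you use without proof --- the description $\mathcal{G}(B_t(u))=\{x_ax_b:\ a\le i_1,\ b\le i_2,\ b-a\ge t\}$ and the sortedness criterion for pairs of degree-two monomials --- are standard (the former from \cite{EHQ}) and are used implicitly in the paper as well, so I see no gap.
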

\begin{proof}
 We   distinguish into the following four cases:

	$(1)$ If $i_{1}=1$, then    $B_t(u)$ is Freiman by Lemma \ref{simple}.
	
	$(2)$ If $i_{1}=2$.  We will prove that  the sorted graph  $G_{B_t(u),s}$ does not contain an induced  cycle of length  $\geq 4$, thus  $B_t(u)$ is Freiman by Theorem \ref{sortable}.

Indeed, if there exists  an induced  $m$-cycle $C_m$ of length $m\geq 4$ in $G_{B_t(u),s}$. Let   $x_{k_{1}}x_{j_{1}},x_{k_{2}}x_{j_{2}},\ldots,x_{k_{m}}x_{j_{m}}$  be the vertices  of $C_m$ in clockwise order,
then $k_{1}\neq k_{3}$, $k_{2}\neq k_{4}$ and $k_i\in \{1,2\}$ for any $1\leq i\leq m$. If $m \geq 5$, then
there exist  at least three different elements $p,q,r$ in $\{1,2,\ldots,m\}$ such that  $k_{p}=k_{q}=k_{r}$.  It follows  that $C_m$ contains an induced  cycle $G$
with vertices $x_{k_{p}}x_{j_{p}},x_{k_{q}}x_{j_{q}}$ and $x_{k_{r}}x_{j_{r}}$. Note that $G$ is also an induced  subgraph of $G_{B_t(u),s}$,
contradicting our assumption that $C_m$  is an induced  cycle graph in $G_{B_t(u),s}$.

If $m=4$. We may assume $k_{1}=1$ without loss of generality, thus $k_{3}=2$, and $j_{1}>j_{3}$ because of no edge between the vertices $x_{k_{1}}x_{j_{1}}$ and $x_{k_{3}}x_{j_{3}}$.
 We consider  the following two cases:
	
$(i)$ If $k_{2}=1$, then  $k_{4}=2$  and $j_{3}\geq j_{2}>j_{4}$. It follows that $j_{1}>j_{4}$, thus there is no edge between vertices $x_{k_{1}}x_{j_{1}}$ and $x_{k_{4}}x_{j_{4}}$ of  the sorted graph  $G_{B_t(u),s}$, a contradiction.

$(ii)$ If $k_{2}=2$, then $k_{4}=1$ and $j_{3}\geq j_{4}>j_{2}$. Hence $j_{1}>j_{2}$. It follows that there exists no edge between the vertices $x_{k_{1}}x_{j_{1}}$ and $x_{k_{2}}x_{j_{2}}$ of  the sorted graph  $G_{B_t(u),s}$, a contradiction.

By the above arguments, we know that the sorted graph  $\mathcal{G}_{B_t(u),s}$ does not contain an induced  cycle of length  $\geq 4$.
	
	$(3)$ If $i_{1}=3$ and  $i_2=t+3$, then $\mathcal{G}(B_t(u))=\{x_{1}x_{t+1},x_{1}x_{t+2},x_{1}x_{t+3},x_{2}x_{t+2},x_{2}x_{t+3},\\
x_{3}x_{t+3}\}$. In particular, if $t=1$, then $\mathcal{G}(B_1(u))=\{x_{1}x_{2},x_{1}x_{3},x_{1}x_{4},x_{2}x_{3},x_{2}x_{4},
x_{3}x_{4}\}$. In this case, $G_{B_{t}(u),s}$ contains an induced  $4$-cycle
with vertices $x_{1}x_{2},x_{2}x_{3},x_{3}x_{4}$ and $x_{1}x_{4}$. This yields that $B_{t}(u)$ is not Freiman by Theorem \ref{sortable}.
 If $t\geq 2$, then
 the sorted graph $G_{B_{t}(u),s}$ is a subgraph of a complete graph with $6$ vertices obtained by deleting an edge $\{ x_{1}x_{t+3},x_{2}x_{t+2}\}$. Hence  $G_{B_{t}(u),s}$ is a chordal graph, it follows that  $B_{t}(u)$ is Freiman by Theorem \ref{sortable}.
	
$(4)$ If $i_{1}=3$ and $i_2\geq t+4$, or $i_1\geq 4$, then $G_{B_{t}(u),s}$ contains an induced  $4$-cycle
with vertices $ x_{1}x_{t+3},x_{3}x_{t+3},x_{2}x_{t+2}$ and $x_{2}x_{t+4}$. Theorem \ref{sortable}  implies that $B_{t}(u)$ is not Freiman.
\end{proof}

\begin{Theorem}\label{special1}
	Let $d\geq 3$  be  an  integer, $u=\prod\limits_{i=1}^{d}x_{(i-1)t+2}$  a $t$-spread monomial in $S$. Then $B_t(u)$ is Freiman.
\end{Theorem}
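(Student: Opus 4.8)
The plan is to invoke Theorem~\ref{sortable}: since $B_t(u)$ is $t$-spread principal Borel, hence sortable, it suffices to show that the sorted graph $G_{B_t(u),s}$ is chordal, and for this I will argue that $G_{B_t(u),s}$ contains no induced cycle of length $\geq 4$. First I would describe $\mathcal{G}(B_t(u))$ explicitly. A generator of $B_t(u)$ is a $t$-spread monomial $x_{a_1}x_{a_2}\cdots x_{a_d}$ with $a_j \leq (j-1)t+2$ for all $j$ and $a_{j+1}-a_j \geq t$; in particular, writing $a_j = (j-1)t + b_j$, the constraint becomes $1 \le b_j \le 2$ together with the non-decrease condition on consecutive terms translating into: once a $b_j$ drops from $2$ to $1$ it must have been preceded appropriately. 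The key structural observation I want to extract is that each generator is essentially encoded by a $0/1$ vector $(c_1,\dots,c_d)$ with $c_j = b_j - 1 \in \{0,1\}$, subject to the $t$-spread condition, and that the lexicographic order on generators corresponds to a natural order on these vectors.

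Next I would analyze when a pair of generators $v = x_{a_1}\cdots x_{a_d}$ and $w = x_{a'_1}\cdots x_{a'_d}$ is sorted. Because all exponents lie in the narrow windows $[(j-1)t+1,\,(j-1)t+2]$ and these windows are disjoint (as $t \ge 1$ forces $(j-1)t+2 < jt+1$ for $t \ge 1$; the boundary case $t=1$ gives $(j-1)\cdot1+2 = j+1 = jt+1$, so adjacent windows touch but this is handled separately), the sorting of $(v,w)$ can be computed window-by-window. I expect to show that $(v,w)$ is sorted if and only if the associated vectors $c = c(v)$ and $c' = c(w)$ are \emph{comparable coordinatewise} (one dominates the other entrywise), or satisfy some similarly simple local criterion. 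Granting this, edges of $G_{B_t(u),s}$ correspond to comparable pairs in a poset of $0/1$-vectors, and an induced cycle of length $\ge 4$ would yield four vectors $c^{(1)}, c^{(2)}, c^{(3)}, c^{(4)}$ with $c^{(1)}, c^{(3)}$ incomparable, $c^{(2)}, c^{(4)}$ incomparable, but consecutive ones comparable — the standard "no induced $C_{\ge 4}$ in a comparability graph" phenomenon, which I would rule out by the usual argument (comparability graphs are perfect, in particular have no odd holes, and for even holes one uses transitivity: from $c^{(1)} \le c^{(2)} \ge c^{(3)} \le c^{(4)} \ge c^{(1)}$ one derives a contradiction with incomparability of a diagonal pair).

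The main obstacle will be the precise combinatorial characterization of sorted pairs: the $t$-spread condition couples consecutive coordinates, so the set of admissible $0/1$-vectors is not a product, and the sorting operator, while computable, may not interact with the naive coordinatewise order quite so cleanly — especially in the boundary case $t=1$ where consecutive windows are adjacent, allowing "carries" between blocks during sorting. I would handle this by first treating $t \ge 2$, where the windows are strictly separated and the block-by-block analysis is clean, and then doing the $t=1$ case by a direct check that any potential induced $4$-cycle forces a forbidden inequality among the indices, in the same style as case $(2)$ of the proof of Theorem~\ref{degree2}. Once the comparability structure is established, chordality — equivalently, the absence of induced cycles of length $\ge 4$ — follows formally, and Theorem~\ref{sortable} gives that $B_t(u)$ is Freiman.
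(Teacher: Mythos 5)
Your overall frame (sortability plus Theorem~\ref{sortable}, so it suffices to exclude induced cycles of length $\geq 4$ in $G_{B_t(u),s}$) matches the paper, but the step you lean on to finish is not valid. You propose to show that edges of the sorted graph are exactly the coordinatewise-comparable pairs of $0/1$-vectors and then conclude chordality from ``the standard no induced $C_{\geq 4}$ in a comparability graph'' phenomenon. That principle is false: comparability graphs are perfect but not chordal in general, and in particular $C_4=K_{2,2}$ \emph{is} a comparability graph (two minimal elements each below two maximal elements). Consequently there is no contradiction to be derived from $c^{(1)}\leq c^{(2)}\geq c^{(3)}\leq c^{(4)}\geq c^{(1)}$ with both diagonals incomparable, so the ``usual argument'' you sketch for even holes does not exist, and your proof has a genuine gap exactly at its decisive step.

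What saves the statement, and what the paper actually uses, is a structural fact you passed over: writing a generator as $x_{a_1}\cdots x_{a_d}$ with $a_j=(j-1)t+b_j$, the constraints $a_j\leq (j-1)t+2$ and $a_{j+1}-a_j\geq t$ force $b_j\in\{1,2\}$ and $b_{j+1}\geq b_j$, i.e.\ the $0/1$-vectors can never ``drop'' (your description of the admissible vectors has this condition backwards). Hence $\mathcal{G}(B_t(u))$ consists only of the $d+1$ step monomials $(\prod_{j\leq i}x_{(j-1)t+1})(\prod_{j>i}x_{(j-1)t+2})$, $0\leq i\leq d$, which form a chain; a direct computation of the product of any two such monomials shows every pair is sorted, so $G_{B_t(u),s}$ is a complete graph and is trivially chordal. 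With this observation no poset or perfect-graph machinery is needed, your concern about ``carries'' between adjacent windows when $t=1$ evaporates (the explicit product decomposes uniformly for all $t\geq 1$), and the separate treatment of $t\geq 2$ versus $t=1$ is unnecessary. As written, however, your argument does not establish chordality.
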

\begin{proof} Note that $\mathcal{G}(B_t(u))=A\cup \{u\}$, where $A=\{(\prod\limits_{j=1}^{i}x_{(j-1)t+1})(\prod\limits_{j=i+1}^{d}x_{(j-1)t+2})\mid 1\leq i\leq d\}$ and $\prod\limits_{j=\ell}^{m}x_j=1$ if $\ell> m$.
  By  simple calculations, one has the pair $(u, v)$ is sorted for any $v\in \mathcal{G}(B_t(u))$.
Let $v_1, v_2\in A$, we may assume $v_1=(\prod\limits_{j=1}^{a}x_{(j-1)t+1})(\prod\limits_{j=a+1}^{d}x_{(j-1)t+2})$,  $v_2=(\prod\limits_{j=1}^{b}x_{(j-1)t+1})(\prod\limits_{j=b+1}^{d}x_{(j-1)t+2})$, where  $1\leq a<b\leq d$.
 Thus $v_1v_2=(\prod\limits_{j=1}^{a}x_{(j-1)t+1}^2)(\prod\limits_{j=a+1}^{b}x_{(j-1)t+1}x_{(j-1)t+2})(\prod\limits_{j=b+1}^{d}x_{(j-1)t+2}^2)$.
  It follows that the pair $(v_1, v_2)$ is sorted. Hence the sorted graph
$G_{B_t(u), s}$ is a complete graph, the desired result follows from   Theorem \ref{sortable}.
\end{proof}

\begin{Theorem}\label{special2}
	Let $d\geq 3$  be  an  integer, $u=(\prod\limits_{i=1}^{d-1}x_{(i-1)t+2})x_{(d-1)t+3}$  a $t$-spread monomial of degree $d$ in $S$. Then $B_t(u)$ is Freiman.
\end{Theorem}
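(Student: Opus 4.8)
The plan is to follow the strategy already used for Theorems~\ref{degree2} and~\ref{special1}. Since every $t$-spread principal Borel ideal is sortable by \cite[Proposition~3.1]{EHQ}, Theorem~\ref{sortable} reduces the claim to showing that the sorted graph $G:=G_{B_t(u),s}$ is chordal. First I would pin down $\mathcal{G}(B_t(u))$ explicitly. Using the (standard) componentwise membership criterion for $t$-spread principal Borel ideals — a $t$-spread monomial $x_{j_1}\cdots x_{j_d}$ with $j_1<\cdots<j_d$ lies in $B_t(x_{i_1}\cdots x_{i_d})$ if and only if $j_k\le i_k$ for all $k$ — together with the automatic bounds $j_k\ge (k-1)t+1$, one sees that for $1\le k\le d-1$ each $j_k$ equals $(k-1)t+1$ or $(k-1)t+2$, and the $t$-spread condition forces the pattern ``all low, then all high''; hence a generator is determined by the number $p\in\{0,1,\dots,d-1\}$ of leading slots attaining the low value, together with the exponent $\delta:=j_d-((d-1)t+1)\in\{0,1,2\}$, which is constrained to $\{1,2\}$ unless $p=d-1$. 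Writing $v_{p,\delta}$ for this generator, one gets $\mu(B_t(u))=2d+1$.

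The second and main step is to compute $\sort$ on pairs of generators. I expect to prove that, for $p\ge p'$, $\sort(v_{p,\delta},v_{p',\delta'})=(v_{p,\min(\delta,\delta')},v_{p',\max(\delta,\delta')})$. Granting this, $\{v_{p,\delta},v_{p',\delta'}\}$ is an edge of $G$ precisely when $(p-p')(\delta-\delta')\le 0$, so the only non-edges are the pairs $\{v_{p,1},v_{q,2}\}$ with $p<q$. In particular $v_{d-1,0}$, $v_{d-1,1}$ and $v_{0,2}$ are universal vertices, and $G$ is complete except for a ``staircase'' of missing edges between the $\delta=1$ generators and the $\delta=2$ generators — in contrast to Theorem~\ref{special1}, where the analogous sorted graph is complete; it is precisely the extra generator $x_{(d-1)t+3}$ in the last slot of $u$ that breaks completeness. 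The hard part is this $\sort$ computation: one must track the sorted word of $v_{p,\delta}v_{p',\delta'}$ with care and check that the answer is uniform in $t$ — the relevant length-two ``blocks'' $\{(k-1)t+1,(k-1)t+2\}$ coming from consecutive slots are pairwise disjoint inside the sorted word when $t\ge 2$ but overlap when $t=1$ — as well as across the boundary generators with $p=d-1$, where this block structure degenerates.

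Finally I would deduce chordality of $G$ by exhibiting a perfect elimination ordering. Delete the $\delta=2$ generators one at a time in decreasing order of~$p$: when $v_{q,2}$ is removed, its neighbours among the remaining vertices are the still-present $\delta=2$ generators, the $\delta=1$ generators $v_{p,1}$ with $p\ge q$, and the universal vertex $v_{d-1,0}$; these are pairwise adjacent, since the $\delta=1$ and $\delta=2$ families are each cliques and $v_{p,1}\sim v_{q',2}$ whenever $p\ge q\ge q'$. Hence $v_{q,2}$ is simplicial at each step. Once all $\delta=2$ generators are gone, the surviving graph on $\{v_{p,1}:0\le p\le d-1\}\cup\{v_{d-1,0}\}$ is a clique, which may be eliminated in any order. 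Therefore $G$ is chordal, and $B_t(u)$ is Freiman by Theorem~\ref{sortable}. (Alternatively, invoking the last statement of Theorem~\ref{sortable}, it would suffice to verify directly that $G$ contains no induced $m$-cycle with $m\ge 4$, using the monotonicity of the above edge criterion in $p$ and $\delta$.)
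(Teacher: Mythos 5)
Your proposal is correct and in substance follows the same route as the paper: you identify the same $2d+1$ generators, arrive at the same description of the sorted graph (the $\delta=1$ and $\delta=2$ families, the paper's sets $A$ and $B$, are cliques; $u$, $u_1$ and $v_{d-1,1}$ are universal; and the only non-edges are $\{v_{p,1},v_{q,2}\}$ with $p<q$, which is exactly the paper's criterion that a cross pair is sorted iff $a\ge b$), and then conclude via Theorem~\ref{sortable}; your key claim $\sort(v_{p,\delta},v_{p',\delta'})=(v_{p,\min(\delta,\delta')},v_{p',\max(\delta,\delta')})$ for $p\ge p'$ is indeed true, and is left at essentially the same level of detail as the paper's ``simple calculations.'' The only genuine difference is in the last step, where you certify chordality by a perfect elimination ordering (deleting the $\delta=2$ vertices in decreasing order of $p$, which are simplicial at each stage), while the paper instead excludes induced cycles of length $\ell\ge 4$ directly (pigeonhole on $A$ and $B$ for $\ell\ge 5$, and the cross-edge criterion for $\ell=4$); both are valid and of comparable length.
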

\begin{proof}  Note that $\mathcal{G}(B_t(u))=\{u_1\}\cup A\cup B\cup \{u\}$, where $u_1=\prod \limits_{j=1}^{d}x_{(j-1)t+1}$,   $A=\{(\prod \limits_{j=1}^{p}x_{(j-1)t+1})(\prod \limits_{j=p+1}^{d}x_{(j-1)t+2})\mid 0\leq p\leq d-1\}$, $B=\{(\prod \limits_{j=1}^{q}x_{(j-1)t+1})(\prod \limits_{j=q+1}^{d-1}x_{(j-1)t+2})\cdot x_{(d-1)t+3}\mid 1\leq q\leq d-1\}$ and $\prod\limits_{j=\ell}^{m}x_j=1$ if $\ell> m$.

It is enough to show that the sorted graph $G_{B_t(u),s}$ of $B_t(u)$ is a chordal graph  by Theorem \ref{sortable}.

Let  $C_\ell$ be  an induced  cycle of length $\ell\geq 4$ in  $G_{B_t(u),s}$ and we label its vertices $v_1,v_2,\ldots,v_\ell$ in clockwise order.
By simple calculations, we obtain   the pairs  $(u, v)$ and $(u_1, v)$ are  sorted for any $v\in \mathcal{G}(B_t(u))$. It follows that $u,u_1\not\in \{v_1,v_2,\ldots,v_\ell\}$.
Let $w_1,w_2$ be two vertices of $C_\ell$. We distinguish into the following three cases:

(i) If $w_1,w_2\in A$,  we may assume $w_1=(\prod \limits_{j=1}^{a}x_{(j-1)t+1})(\prod \limits_{j=a+1}^{d}x_{(j-1)t+2})$, $w_2=(\prod \limits_{j=1}^{b}x_{(j-1)t+1})(\prod \limits_{j=b+1}^{d}x_{(j-1)t+2})$ where $0\leq a<b\leq d-1$. Thus
$$w_1w_2=(\prod \limits_{j=1}^{a}x_{(j-1)t+1}^2)(\prod \limits_{j=a+1}^{b}x_{(j-1)t+1}x_{(j-1)t+2})(\prod \limits_{j=b+1}^{d}x_{(j-1)t+2}^2).$$
It follows that the pair $(w_1,w_2)$ is sorted.

(ii) If  $w_1,w_2\in B$, then, by an argument similar to (i), one has the pair $(w_1,w_2)$ is also  sorted.

(iii) If $w_1\in A$, $w_2\in B$, then $w_1=(\prod \limits_{j=1}^{a}x_{(j-1)t+1})(\prod \limits_{j=a+1}^{d}x_{(j-1)t+2})$, $w_2=(\prod \limits_{j=1}^{b}x_{(j-1)t+1})\\ \cdot(\prod \limits_{j=b+1}^{d-1}x_{(j-1)t+2})x_{(d-1)t+3}$
for some $a,b$.
Note that $x_{(d-1)t+2}$ is a  factor of $w_1$ and $x_{(d-1)t+3}$ is a  factor of $w_2$, it follows that the pair $(w_1,w_2)$ is also  sorted if and only if  $a\geq b$.

If $\ell\geq 5$, then there exist  at least three different elements $v_{i_1},v_{i_2},v_{i_3}\in A$ or $v_{i_1},v_{i_2},v_{i_3}\in B$.  It follows  that $C_\ell$ contains an induced  cycle $G$
with vertices $v_{i_1},v_{i_2}$ and $v_{i_3}$  by case (i) or (ii). Moreover $G$ is also an induced  subgraph of $G_{B_t(u),s}$,
contradicting our assumption that $C_\ell$  is an induced  cycle graph in $G_{B_t(u),s}$.

If $\ell=4$. Let $v_1\in A$, then $v_3\in B$ since there exists no edge  between the vertices $v_1$ and $v_3$.
 We prove the desired results when $v_2\in A$, and the case $v_2\in B$ can be shown by similar arguments.
 Let $v_2\in A$, then $v_4\in B$. We may assume that $v_1=(\prod \limits_{j=1}^{a}x_{(j-1)t+1})(\prod \limits_{j=a+1}^{d}x_{(j-1)t+2})$, $v_2=(\prod \limits_{j=1}^{b}x_{(j-1)t+1})(\prod \limits_{j=b+1}^{d}x_{(j-1)t+2})$,
 $v_3=(\prod \limits_{j=1}^{c}x_{(j-1)t+1})(\prod \limits_{j=c+1}^{d-1}x_{(j-1)t+2})x_{(d-1)t+3}$ and  $v_4=(\prod \limits_{j=1}^{e}x_{(j-1)t+1})(\prod \limits_{j=e+1}^{d-1}x_{(j-1)t+2})x_{(d-1)t+3}$, where $a\neq b$ and
  $c\neq e$. Note that
 $x_{(d-1)t+2}$ is a common factor of $v_1$ and  $v_2$, and  $x_{(d-1)t+3}$ is a  common  factor of $v_3$ and $v_4$. Hence $a\geq e$ and $b\geq c$ since $\{v_1,v_4\}$ and $\{v_2,v_3\}$  are  the two edges of  $C_\ell$.
 If $a>b$, then $a>c$, this implies that $\{v_1,v_3\}$  is an edge of  $C_\ell$.  If $b>a$, then $b>e$, this implies that $\{v_2,v_4\}$  is an edge of  $C_\ell$. Hence, in both cases, contradicting our assumption that
that $C_\ell$ is  an induced cycle of length $\ell$ in $G_{B_{t}(u),s}$.
We finished the proof.
\end{proof}

\begin{Theorem}\label{end}
	Let $d\geq 3$  be  an  integer, $u=\prod\limits_{j=1}^{d}x_{i_j}$ a $t$-spread monomial of degree $d$ in $S$.
\begin{itemize}
		\item[(1)]  If $i_1=2$, then $B_t(u)$ is Freiman if and only if $u=\prod\limits_{i=1}^{d}x_{(i-1)t+2}$ or $u=(\prod\limits_{i=1}^{d-1}x_{(i-1)t+2})x_{(d-1)t+3}$;
        \item[(2)] If $i_1\geq 3$, then  $B_{t}(u)$ is not Freiman.
\end{itemize}
\end{Theorem}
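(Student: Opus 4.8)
The plan is to reduce everything to showing that the sorted graph $G_{B_t(u),s}$ is chordal exactly in the claimed cases, using Theorem~\ref{sortable}. For part (1), the "if" direction is already handled: the monomials $u=\prod_{i=1}^{d}x_{(i-1)t+2}$ and $u=(\prod_{i=1}^{d-1}x_{(i-1)t+2})x_{(d-1)t+3}$ give Freiman ideals by Theorems~\ref{special1} and~\ref{special2}. So the real content is the "only if" direction of (1) together with the whole of (2), i.e. showing that for every other $t$-spread monomial $u=\prod_{j=1}^d x_{i_j}$ with $d\ge 3$ and $i_1\ge 2$, the ideal $B_t(u)$ is not Freiman; by Theorem~\ref{sortable} it suffices to exhibit an induced cycle of length $\ge 4$ (in practice an induced $4$-cycle) in $G_{B_t(u),s}$. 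The key observation to exploit is that for a $t$-spread strongly stable ideal, membership of a $t$-spread monomial $w=x_{k_1}\cdots x_{k_d}$ (with $k_1<\dots<k_d$) in $B_t(u)$ is governed coordinatewise: $w\in B_t(u)$ iff $k_j\le i_j$ for all $j$. This makes it easy to write down candidate vertices and to check when a pair is sorted.

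First I would dispose of part (2), $i_1\ge 3$. Here $x_1 x_{t+1}\cdots x_{(d-2)t+1}\,x_{(d-2)t+3}\cdot$ (with the last coordinate chosen as $x_{(d-1)t+3}$, which lies in $B_t(u)$ because $i_{d-1}\ge 3 \le$ the relevant bounds forced by $t$-spreadness of $u$) — more precisely I would build on the degree-$2$ argument from Theorem~\ref{degree2}(4): there the four monomials $x_1x_{t+3}$, $x_3x_{t+3}$, $x_2x_{t+2}$, $x_2x_{t+4}$ form an induced $4$-cycle. I would "prepend" the common $t$-spread prefix $x_1 x_{t+1}\cdots x_{(d-3)t+1}$ (of length $d-2$) to all four of these to get four degree-$d$ monomials in $B_t(u)$: since $i_1\ge 3$ and $u$ is $t$-spread, one checks $i_j\ge (j-1)t+1$ for $j\le d-2$, $i_{d-1}\ge (d-2)t+3$, $i_d\ge (d-1)t+3$, so all four extended monomials satisfy the coordinatewise bound and hence lie in $B_t(u)$. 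Because the prefix is identical in all four, the sorting behaviour of any pair is determined by the behaviour of the corresponding degree-$2$ suffixes, so the induced $4$-cycle persists. Thus $B_t(u)$ is not Freiman.

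Next I would handle the "only if" direction of (1), $i_1=2$. Write $u=x_2 x_{i_2}\cdots x_{i_d}$ with $i_j\ge (j-1)t+2$. The two Freiman cases are precisely $i_j=(j-1)t+2$ for all $j$, and $i_j=(j-1)t+2$ for $j\le d-1$ with $i_d=(d-1)t+3$. So suppose $u$ is neither; then either (a) some $i_j\ge (j-1)t+3$ with $j\le d-1$, or (b) $i_j=(j-1)t+2$ for all $j\le d-1$ but $i_d\ge (d-1)t+4$. In case (b), $B_t(u)$ contains $B_t((\prod_{i=1}^{d-1}x_{(i-1)t+2})x_{(d-1)t+4})$, and I would produce an induced $4$-cycle among monomials differing only in the last two coordinates, mimicking the suffix structure $\{x_*x_{(d-1)t+3},\,x_*x_{(d-1)t+4},\dots\}$ that failed already in the degree-$2$ analysis with $i_1=2$, $i_2$ large — wait, Theorem~\ref{degree2}(2) shows $i_1=2$ is always Freiman in degree $2$, so instead the obstruction in (b) must come from interaction of the last coordinate with the $x_2$ at the front; I would take the four vertices $\prod_{j=1}^{d-1}x_{(j-1)t+1}\cdot x_{(d-1)t+3}$-type monomials versus $x_2\cdot\prod\cdot x_{(d-1)t+4}$-type ones and verify non-adjacency via the sorting rule. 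In case (a), let $p\le d-1$ be minimal with $i_p\ge (p-1)t+3$; then $B_t(u)\supseteq B_t(v)$ where $v$ has the "critical" shape with a jump of size $\ge 3$ at position $p$, and I would again localize an induced $4$-cycle to coordinates $p-1,p$ (or $p,p+1$), adapting the $x_1=3$ degree-$2$ four-cycle $\{x_1x_{t+3},x_3x_{t+3},x_2x_{t+2},x_2x_{t+4}\}$ — rescaled by $t$ and shifted — to this window, with identical prefixes and suffixes outside the window.

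The main obstacle will be the bookkeeping in case (1a)/(1b): one must check both that the four chosen monomials genuinely lie in $B_t(u)$ (coordinatewise bounds, using minimality of $p$ to control earlier coordinates and $t$-spreadness/the jump to control later ones) and that the four relevant "sorting" computations behave exactly as in the degree-$2$ prototype once a common prefix and suffix are attached. The subtle point is that attaching a common suffix $x_{k_{m+1}}\cdots x_{k_d}$ does \emph{not} in general preserve sortedness of a pair of prefixes, since the sorting operator interleaves the whole word; so I would isolate and prove a small lemma: if $(u_1,u_2)$ and $(u_3,u_4)\in S_m\times S_m$ and all of $u_1,u_2,u_3,u_4$ have all their exponents in variables $x_1,\dots,x_r$ while $w\in S_{d-m}$ has all exponents in $x_{r+1},\dots,x_n$ with $\min\supp(w)\ge \max_i\max\supp(u_i)$, then $(u_iw,u_jw)$ is sorted iff $(u_i,u_j)$ is sorted — which is immediate from the definition of $\sort$ since the concatenation $u_iu_jw^2$ already has its letters in non-decreasing order block by block. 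With this lemma in hand, every case reduces cleanly to the degree-$2$ computations already carried out in Theorem~\ref{degree2}, and the proof concludes by invoking Theorem~\ref{sortable}.
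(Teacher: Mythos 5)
Your overall strategy (reduce to Theorem~\ref{sortable}, use the coordinatewise membership criterion $k_j\le i_j$, and note that attaching a common block of strictly larger variables to all vertices preserves sortedness and non-sortedness) is sound, and the ``if'' direction of (1) via Theorems~\ref{special1} and~\ref{special2} matches the paper. But the core of the argument --- actually exhibiting induced $4$-cycles in every non-Freiman case --- has a genuine gap. In part (2) your construction does not stay inside $B_t(u)$: taken literally, prepending the prefix $x_1x_{t+1}\cdots x_{(d-3)t+1}$ to the degree-$2$ cycle $\{x_1x_{t+3},x_3x_{t+3},x_2x_{t+2},x_2x_{t+4}\}$ produces monomials that are not even squarefree or $t$-spread (the prefix and the cycle live in the same low variables); and under the intended reading (shift the cycle into the last two positions, or append a high suffix to it) the vertex coming from $x_2x_{t+4}$ requires $i_d\ge (d-1)t+4$ (respectively $i_3\ge 2t+4$), which is \emph{not} implied by $i_1\ge 3$. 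For $u=x_3x_{t+3}x_{2t+3}\cdots x_{(d-1)t+3}$, which the theorem asserts is not Freiman, your four monomials are not all in $\mathcal{G}(B_t(u))$, so no contradiction is obtained. The paper avoids exactly this by using a degree-$3$ core cycle $\{x_1x_{t+1}x_{2t+3},\,x_2x_{t+2}x_{2t+3},\,x_1x_{t+2}x_{2t+2},\,x_1x_{t+3}x_{2t+3}\}$ multiplied by the suffix $\prod_{j\ge 4}x_{(j-1)t+3}$, whose $j$-th coordinate never exceeds $(j-1)t+3$ and hence always satisfies the bounds forced by $i_1\ge 3$.

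In the ``only if'' direction of (1) the proposal stops at a promise: in your cases (a) and (b) the four vertices are never written down, and, as you yourself observe, there is no degree-$2$ prototype to transplant, since for $i_1=2$ the degree-$2$ ideal is always Freiman; so the obstruction necessarily involves three positions interacting (the $x_2$ in front together with a later jump), and a genuinely new configuration must be produced and checked for membership (coordinatewise bounds using minimality of the jump) and for the exact sorted/unsorted pattern. This is precisely the content the paper supplies: for $d=3$ the explicit sets $A=\{x_1x_{t+2}x_{2t+2},x_1x_{t+2}x_{2t+4},x_1x_{t+1}x_{2t+3},x_2x_{t+2}x_{2t+3}\}$ (when $i_2=t+2$, $i_3\ge 2t+4$) and $B=\{x_1x_{t+1}x_{2t+3},x_1x_{t+3}x_{2t+3},x_1x_{t+2}x_{2t+2},x_2x_{t+2}x_{2t+3}\}$ (when $i_2\ge t+3$), and for $d\ge 4$ the analogous sets $C$ and $D$ placed in the window of positions $d-2,d-1,d$ with the prefix $\prod_{j=1}^{d-3}x_{(j-1)t+1}$, split according to whether some $i_j=(j-1)t+3$ with $j\le d-1$ or some $i_j\ge(j-1)t+4$. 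Your auxiliary lemma on attaching a common high-variable block is correct and is implicitly what the paper uses, but without correct, verified $4$-cycles in both (1) and (2) the proof is not complete as proposed.
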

\begin{proof}
	(1)  First, if  $u=\prod\limits_{i=1}^{d}x_{(i-1)t+2}$ or $u=(\prod\limits_{i=1}^{d-1}x_{(i-1)t+2})x_{(d-1)t+3}$, then  $B_t(u)$ is Freiman by Theorems \ref{special1} and  \ref{special2}.
 In the remaining cases, we distinguish into the following two cases:

$(i)$ Let $d=3$ and  $u=x_2x_{i_2}x_{i_3}$.  If $i_2=t+2$ and $i_3\geq 2t+4$, then
$A\subset \mathcal{G}(B_t(u))$, where $A=\{x_1x_{t+2}x_{2t+2},x_1x_{t+2}x_{2t+4},x_1x_{t+1}x_{2t+3},x_2x_{t+2}x_{2t+3}\}$.
It's easy to see that the graph  with the vertex set $A$  is an induced $4$-cycle in $G_{B_{t}(u),s}$.
If $i_2\geq t+3$, then $B\subset \mathcal{G}(B_t(u))$, where $B=\{x_1x_{t+1}x_{2t+3},x_1x_{t+3}x_{2t+3},x_1x_{t+2}x_{2t+2},x_2x_{t+2}x_{2t+3}\}$.
Moreover,  the graph  with the vertex set $B$  is an induced $4$-cycle  in $G_{B_{t}(u),s}$.
In both cases, Theorem \ref{sortable} implies that $B_{t}(u)$ is not Freiman.

$(ii)$ Suppose $d\geq 4$ and $u=x_{2}\prod\limits_{j=2}^{d}x_{i_j}$. If there exists  some $i_j=(j-1)t+3$ for  $2\leq j\leq d-1$, then
$C\subset \mathcal{G}(B_t(u))$, where  $v=\prod\limits_{j=1}^{d-3}x_{(j-1)t+1}$ and $C=\{x_{(d-3)t+1}x_{(d-2)t+1}x_{(d-1)t+3}v,x_{(d-3)t+1}x_{(d-2)t+3}x_{(d-1)t+3}v,
x_{(d-3)t+1}x_{(d-2)t+2}x_{(d-1)t+2}v,\\ x_{(d-3)t+2}x_{(d-2)t+2}x_{(d-1)t+3}v\}$.
It's easy to see that the graph  with the vertex set $C$  is an induced $4$-cycle  in $G_{B_{t}(u),s}$.
If $i_j\geq (j-1)t+4$ for some  $2\leq j\leq d$, then $D\subset \mathcal{G}(B_t(u))$, where $D=\{x_{(d-3)t+1}x_{(d-2)t+2}x_{(d-1)t+2}v,x_{(d-3)t+1}x_{(d-2)t+2}x_{(d-1)t+4}v,\\ x_{(d-3)t+1}x_{(d-2)t+1}x_{(d-1)t+3}v,x_{(d-3)t+2}x_{(d-2)t+2}x_{(d-1)t+3}v\}$
and  $v=\prod\limits_{j=1}^{d-3}x_{(j-1)t+1}$.
Moreover,  the graph  with the vertex set $D$  is an induced $4$-cycle in $G_{B_{t}(u),s}$.
In both cases, Theorem \ref{sortable} implies that $B_{t}(u)$ is not Freiman.

(2)  Note that $A\subset \mathcal{G}(B_t(u))$, where $A=\{x_{1}x_{t+1}x_{2t+3}v,x_{2}x_{t+2}x_{2t+3}v,x_{1}x_{t+2}x_{2t+2}v,\\
x_{1}x_{t+3}x_{2t+3}v\}$, $v=\prod\limits_{j=4}^dx_{(j-1)t+3}$ and we stipulate $\prod\limits_{j=\ell}^dx_{(j-1)t+3}=1$ if $\ell>d$.
It's obvious  that the graph  with the vertex set $A$  is an induced $4$-cycle  in $G_{B_{t}(u),s}$.   Theorem \ref{sortable} implies that $B_{t}(u)$ is not Freiman.
\end{proof}

\begin{Lemma}\label{degree}
	Let $S=K[x_1,\ldots,x_n]$ and  $S'=S[x_{n+1},\ldots,x_{n+t}]$ be two polynomial rings over field $K$. Let  $u=x_1x_{i_2}\cdots x_{i_d}$ is a $t$-spread monomial of degree $d$ in $S'$ and the map
	\[
	\varphi_t:S'\longrightarrow S, \ u\longmapsto x_{i_2-t}x_{i_3-t}\cdots x_{i_d-t}.
	\]
	Then $B_t(u)$ is Freiman if and only if $B_t(\varphi_t(u))$ is Freiman.
\end{Lemma}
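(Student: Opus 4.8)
The plan is to reduce everything to the combinatorics of sorted graphs. Both $B_t(u)$ and $B_t(\varphi_t(u))$ are $t$-spread principal Borel ideals, hence sortable by \cite[Proposition 3.1]{EHQ}, so by Theorem~\ref{sortable} each is Freiman if and only if its sorted graph is chordal. Thus it is enough to produce a graph isomorphism $G_{B_t(u),s}\cong G_{B_t(\varphi_t(u)),s}$. The natural candidate is the ``strip off $x_1$ and shift down by $t$'' map, which is exactly $\varphi_t$ restricted to the generators; I would first check it is a vertex bijection and then check it intertwines the sorting operator.

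For the vertex bijection, recall the standard description of the minimal generators of a $t$-spread principal Borel ideal: $\mathcal{G}(B_t(u))$ is the set of all $t$-spread monomials $x_{j_1}\cdots x_{j_d}$ with $j_k\le i_k$ for all $k$ (this is the description used explicitly in the proofs of Theorems~\ref{special1} and \ref{special2}). Since $i_1=1$, every generator of $B_t(u)$ has $j_1=1$, and since $x_1x_{j_2}$ is $t$-spread we have $j_2\ge t+1$; hence $\psi\colon x_1x_{j_2}\cdots x_{j_d}\mapsto x_{j_2-t}\cdots x_{j_d-t}$ is well defined, and the image is $t$-spread with $k$th index $j_{k+1}-t\le i_{k+1}-t$, so it lies in $\mathcal{G}(B_t(\varphi_t(u)))$. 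Conversely, a generator $x_{k_1}\cdots x_{k_{d-1}}$ of $B_t(\varphi_t(u))$ is the image of the generator $x_1x_{k_1+t}\cdots x_{k_{d-1}+t}$ of $B_t(u)$, so $\psi$ is a bijection on vertex sets. Now take distinct $v=x_1x_{j_2}\cdots x_{j_d}$ and $w=x_1x_{l_2}\cdots x_{l_d}$ in $\mathcal{G}(B_t(u))$; the two smallest exponents of $vw$ are $x_1^2$, so if $m_1\le\cdots\le m_{2d-2}$ is the sorted multiset $\{j_2,\dots,j_d\}\cup\{l_2,\dots,l_d\}$, then $\sort(v,w)=(v',w')$ with $v'=x_1x_{m_1}x_{m_3}\cdots x_{m_{2d-3}}$ and $w'=x_1x_{m_2}x_{m_4}\cdots x_{m_{2d-2}}$. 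Both $v'$ and $w'$ again begin with $x_1$ and have all other indices $\ge t+1$, so $\psi$ applies to them; since subtracting $t$ preserves the ordering of the $m_k$, a direct comparison gives $\psi(v')=(\psi(v))'$ and $\psi(w')=(\psi(w))'$. Because $(v,w)$ is sorted exactly when $\{v',w'\}=\{v,w\}$, and $\psi$ is injective, this transports to $\{(\psi(v))',(\psi(w))'\}=\{\psi(v),\psi(w)\}$, i.e. $(\psi(v),\psi(w))$ is sorted. Hence $\psi$ is an isomorphism of sorted graphs, chordality is preserved in both directions, and the lemma follows from Theorem~\ref{sortable}.

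I do not anticipate a serious obstacle here: the only substantive input is the generator description of $t$-spread principal Borel ideals, and everything else is index bookkeeping in the sorting operator. The two points needing care are (i) that $i_1=1$ is used essentially --- it is what forces every generator to begin with $x_1$, so that ``strip $x_1$ and shift'' is a genuine bijection rather than just an injection --- and (ii) verifying the $t$-spread feasibility constraints ($j_2\ge t+1$, $j_{k+1}-t\le i_{k+1}-t$, and their inverses) so that $\psi$ and $\psi^{-1}$ really land in the respective generating sets; the sorting intertwining computation itself is routine once these are in place.
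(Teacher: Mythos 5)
Your proposal is correct and takes essentially the same route as the paper: the paper's proof also observes that $(u_1,u_2)$ is sorted in $S'$ if and only if $(\varphi_t(u_1),\varphi_t(u_2))$ is sorted in $S$, concludes that $G_{B_t(u),s}\cong G_{B_t(\varphi_t(u)),s}$, and applies Theorem~\ref{sortable}. The only difference is that you spell out the details the paper leaves implicit, namely the generator description, the verification that the strip-$x_1$-and-shift map is a bijection on minimal generators, and the explicit intertwining with the sorting operator.
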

\begin{proof}
	Let	$u_1=x_1x_{i_2}\cdots x_{i_d}$, $u_2=x_1x_{j_2}\cdots x_{j_d}$ be two  $t$-spread monomials of degree $d$  in $S'$. From the
	definition of $\varphi_t$, one has  the pair $(u_1,u_2)$ is sorted in $S'$ if and only if the pair $(\varphi_t(u_1),\varphi_t(u_2))$ is sorted in $S$. It follows that the  sorted  graphs $G_{B_t(u),s}$ and $G_{B_t(\varphi_t(u)),s}$ are isomorphic. Hence $B_t(u)$ is Freiman if and only if $B_t(\varphi_t(u))$ is Freiman from Theorem~\ref{sortable}.
\end{proof}

	Let  $m$ be a positive integer, we set $[m]=\{1,\ldots,m\}$.

\begin{Theorem}\label{start}
	Let $d\geq 3$  be  an  integer, $u=x_1\prod\limits_{j=2}^{d}x_{i_j}$ a $t$-spread monomial of degree $d$ in $S$.
\begin{itemize}
 \item[(1)]  If  $t=1$, then $B_{t}(u)$ is Freiman if and only if one of the follows holds:
	\begin{itemize}
		\item[(i)]   $i_j=(j-1)t+1$ for any $j\in [d-2]$ and $i_{d-1}\in \{(d-2)t+1,(d-2)t+2\}$;
		\item[(ii)] There exsits some $p\in [d-3]$  such that $i_j=(j-1)t+1$ for any $j\in [p]$,
  $i_j=(j-1)t+2$ for any $j\in [d-1]\setminus [p]$ and $i_{d}\in \{(d-1)t+2,(d-1)t+3\}$.
\end{itemize}
    \item[(2)]  If $t\geq 2$, then $B_{t}(u)$ is Freiman if and only if one of the follows holds:
	\item[(i)]   $i_j=(j-1)t+1$ for any $j\in [d-2]$ and  $i_{d-1}\in \{(d-2)t+1,(d-2)t+2\}$, or $i_j=(j-1)t+1$ for any $j\in [d-2]$,
 $i_{d-1}=(d-2)t+3$ and $i_{d}=(d-1)t+3$;
		\item[(ii)] There exsits some $q\in [d-3]$  such that $i_j=(j-1)t+1$ for any $j\in [q]$,
  $i_j=(j-1)t+2$ for any $j\in [d-1]\setminus [q]$ and $i_{d}\in \{(d-1)t+2,(d-1)t+3\}$.
\end{itemize}
\end{Theorem}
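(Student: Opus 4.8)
\textbf{Proof proposal for Theorem~\ref{start}.}
The plan is to reduce both parts to a careful combinatorial analysis of the sorted graph $G_{B_t(u),s}$ via Theorem~\ref{sortable}, using the building blocks already established. First I would dispose of the ``if'' direction. In case (1)(i) and case (2)(i) first alternative, the hypothesis $i_j=(j-1)t+1$ for $j\in[d-2]$ with $i_{d-1}\in\{(d-2)t+1,(d-2)t+2\}$ puts $u$ in the scope of Lemma~\ref{simple} (when $i_{d-1}=(d-2)t+1$, so $u$ has the ``last variable free'' shape after collapsing) or reduces, after stripping the common prefix $\prod_{j=1}^{d-2}x_{(j-1)t+1}$, to a degree-$2$ principal Borel ideal $B_t(x_1 x_{i_{d-1}'} x_{i_d'})$-type configuration already handled in Theorem~\ref{degree2}; more precisely I would show $G_{B_t(u),s}$ is complete or chordal directly. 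For case (1)(ii) and case (2)(ii), I would apply Lemma~\ref{degree} repeatedly: peeling off the initial string of exponents $(j-1)t+1$ via $\varphi_t$ shifts the problem down until $i_1=2$, landing exactly on the monomials $\prod_{i=1}^{d'}x_{(i-1)t+2}$ or $(\prod_{i=1}^{d'-1}x_{(i-1)t+2})x_{(d'-1)t+3}$, which are Freiman by Theorems~\ref{special1} and~\ref{special2}. The extra alternative in case (2)(i), $i_{d-1}=(d-2)t+3$ and $i_d=(d-1)t+3$, is the genuinely new configuration and I would check chordality of $G_{B_t(u),s}$ by hand: after collapsing the prefix it becomes a small fixed ideal independent of $d$, and one lists its sorted graph explicitly (it should again be a complete graph minus one edge, as in Theorem~\ref{degree2}(b) case (3)).

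For the ``only if'' direction I would argue contrapositively: assuming $u$ is \emph{not} of any of the listed forms, exhibit an induced $4$-cycle in $G_{B_t(u),s}$, which by Theorem~\ref{sortable} kills Freimanness. Using Lemma~\ref{degree} I may assume $i_2\le t+3$ (otherwise, if $i_2\ge t+4$, the exponents force a configuration where one collapses to $i_1\ge 3$-type data and invokes Theorem~\ref{end}(2), or one builds the $4$-cycle directly as in Theorem~\ref{end}). The remaining cases split on where the exponent sequence first deviates from $(j-1)t+1$. If it deviates at position $p\le d-2$ by jumping to $(p-1)t+2$ but then \emph{again} deviates (i.e. some later $i_j\ge (j-1)t+3$, or some $i_j=(j-1)t+1$ after a $+2$ has occurred, or $i_d\ge(d-1)t+4$), I would produce an explicit $4$-cycle patterned on the sets $A,B,C,D$ appearing in the proof of Theorem~\ref{end}: namely a quadruple of generators of the form $x_1\cdots x_{a}x_{b}x_{c}$ with two ``compatible'' pairs sorted and the two ``crossing'' pairs unsorted. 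If instead the sequence is $(j-1)t+1$ then $(j-1)t+2$ (single deviation at some $p$) but the tail $i_d$ is too large ($i_d\ge(d-1)t+4$, resp.\ in the $t\ge2$ case a mismatch between $i_{d-1}=(d-2)t+3$ and $i_d$), the same template yields a $4$-cycle. The $t=1$ versus $t\ge2$ dichotomy enters precisely here: when $t=1$ the monomial $x_1x_3x_4\cdots$ pattern (the ``$i_{d-1}=(d-2)t+3$'' alternative) does create a $4$-cycle $x_1x_2,x_2x_3,x_3x_4,x_1x_4$ after collapsing, exactly as in Theorem~\ref{degree2}(a), whereas for $t\ge2$ that edge survives, which is why the extra alternative appears in part (2)(i).

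The main obstacle I anticipate is the bookkeeping in the ``only if'' direction: there are several ways the exponent sequence can fail the classification, and for each I must guarantee that the four explicitly written generators (i) actually lie in $\mathcal{G}(B_t(u))$ — i.e.\ are $t$-spread and obtained from $u$ by the strongly-stable moves — and (ii) form an \emph{induced} $4$-cycle, meaning I must verify both the two non-edges (unsorted pairs) and that no chord exists. Verifying a pair $(v,w)$ is unsorted amounts to locating an index where the sorted word interleaves the two monomials in the ``wrong'' order, and the cleanest way to organize this is to always choose the four generators so they share a long common factor $v=\prod_{j=1}^{d-3}x_{(j-1)t+1}$ (or similar) and differ only in the last three slots, reducing every sortedness check to a degree-$3$ computation identical in form to those in Theorems~\ref{degree2} and~\ref{end}. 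I would therefore structure the proof as: (a) reduce to $d'\le$ small via $\varphi_t$; (b) a finite case table for the reduced monomials; (c) for the negative cases, one master lemma producing the $4$-cycle from three ``free'' slots, applied with the appropriate prefix. The $t\ge 2$ extra alternative and the boundary cases $p=d-2,d-3$ will need separate, explicit small verifications, but these are bounded-size and can be checked (and were, per the CoCoA acknowledgement) mechanically.
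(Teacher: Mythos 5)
Your proposal follows essentially the same route as the paper: the paper's proof of Theorem~\ref{start} is precisely the reduction you describe, namely repeatedly stripping the leading variable with Lemma~\ref{degree} and then invoking Lemma~\ref{simple}, Theorem~\ref{degree2}, and Theorems~\ref{special1}--\ref{end} for the resulting lower-degree or $i_1\geq 2$ configurations, which settles both directions at once. The only difference is that your plan re-derives some induced $4$-cycles by hand in the ``only if'' direction, which is unnecessary since after the reduction those negative cases are already covered by the cited theorems.
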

\begin{proof}The desired results hold  by repeatedly appling Lemmas \ref{simple}, \ref{degree}  and  Theorems  \ref{degree2} $\sim$  \ref{end}.
\end{proof}

	\medskip
	\hspace{-6mm} {\bf Acknowledgments}
	
	\vspace{3mm}
	\hspace{-6mm}  This research is supported by the National Natural Science Foundation of China (No.11271275) and  by foundation of the Priority Academic Program Development of Jiangsu Higher Education Institutions.

	\medskip

\end{document}